\newtheorem{theorem}{Theorem}[section]
\newtheorem{proposition}[theorem]{Proposition} %%Delete [thm] to re-start
\newtheorem{thmletter}{Theorem}
\newtheorem{corollary}[theorem]{Corollary}
\newtheorem{question}[theorem]{Question}
\newcommand{\p}[1]{\noindent {\newline\bf #1.}}
\newcommand{\out}{\operatorname{Out}}
\newcommand{\aut}{\operatorname{Aut}}
\newcommand{\inn}{\operatorname{Inn}}
\newcommand{\cat}{\operatorname{CAT}}
\title{On a question of Bumagin and Wise}
\author{Alan D. Logan}
\address{School of Mathematics and Statistics\\
University of Glasgow\\
  Glasgow, G12 8QW, UK}
\email{Alan.Logan@glasgow.ac.uk}
\subjclass[2010]{20E06, 20E26, 20F28, 20F67}
\keywords{Residually finite groups, Outer automorphism groups, Recursive presentability}
\begin{document}

\maketitle

\begin{abstract}
Motivated by a question of Bumagin and Wise, we construct a continuum of finitely generated, residually finite groups whose outer automorphism groups are pairwise non-isomorphic finitely generated, non-recursively-presentable groups. These are the first examples of such residually finite groups.
\end{abstract}

\section{Introduction}
In this paper we construct the first examples of finitely generated, residually finite groups $G$ whose outer automorphism groups are finitely generated and not recursively presentable. Indeed, we construct a continuum, so $2^{\aleph_0}$, of such groups $G$ with pairwise non-isomorphic outer automorphism groups. Our construction is motivated by a question of Bumagin and Wise, who asked if every countable group $Q$ could be realised as the outer automorphism group of a finitely generated, residually finite group $G_Q$.
In this paper we solve a finite-index version of this question for $Q$ finitely generated and residually finite, and the aforementioned result then follows. Bumagin and Wise solved the question for $Q$ finitely presented \cite{BumaginWise2005}. In previous work, the author gave a partial solution for $Q$ finitely generated and recursively presentable \cite[Theorem A]{logan2015outer}, and a complete solutions for these groups assuming that there the exists a ``malnormal version'' of Higman's embedding theorem \cite[Theorem B]{logan2015outer}.

\p{Residually finite groups}
A group $G$ is residually finite if for all $g\in G\setminus\{1\}$ there exists a homomorphism image $\phi_g:G\rightarrow F_g$ where $F_g$ is finite and where $\phi_g(g)\neq1$.
Residual finiteness is a strong finiteness property. For example, finitely presentable, residually finite groups have soluble word problem, while finitely generated, residually finite groups are Hopfian \cite{Malcev1940}.
Our main result, which is Theorem~\ref{corol:mainresult}, contrasts with these ``nice'' properties as it implies that finitely generated groups can have very complicated symmetries.

Fundamental to this paper is the existence of finitely generated, residually finite groups which are not recursively presentable. Bridson-Wilton \cite[Section 2]{bridson2015triviality} point out that the existence of such groups follows from work of Slobodsko\v\i \cite{Slobodskoi1981undecidability}. The ``continuum'' statement in the main result, Theorem~\ref{corol:mainresult}, relies on the fact that there is a continuum of such groups, which is due to Minasyan-Ol'shanskii-Sonkin \cite[Theorem 4]{minasyan2009periodic}.
To see that the existence of such groups is fundamental to our argument, suppose that every finitely generated, residually finite group is recursively presentable, and let $G$ be a finitely generated, residually finite group with finitely generated outer automorphism group. Then $\aut(G)$ is finitely generated and residually finite \cite{Baumslag}, and hence is recursively presentable. Therefore, as the kernel of $\aut(G)\rightarrow \out(G)$ is finitely generated (because $\inn(G)\cong G/Z(G)$), $\out(G)$ is also recursively presentable. Hence, the existence of finitely generated, residually finite groups which are not recursively presentable is necessary for our argument.

\p{The main construction}
The main result of this paper, the result stated in the abstract, is Theorem~\ref{corol:mainresult}. This theorem follows from a more general construction, Theorem~\ref{thm:maintheorem}, which relates to the outer automorphism groups of HNN-extensions of
certain groups. Theorem~\ref{thm:maintheorem} yields the following two corollaries, each of which individually solves Bumagin and Wise's question up to finite index for $Q$ finitely generated and residually finite. A {triangle group} $T_{i, j, k}:=\langle a, b; a^i, b^j, (ab)^k\rangle$ is called hyperbolic if $i^{-1}+j^{-1}+k^{-1}<1$.

\begin{corollary}
\label{corol:triangle}
Fix a hyperbolic triangle group $H:=T_{i, j, k}$. Then every finitely-generated group $Q$ can be embedded as a finite index subgroup of the outer automorphism group of an HNN-extension $G_Q$ of $H$, where $G_Q$ is residually finite if $Q$ is residually finite.
\end{corollary}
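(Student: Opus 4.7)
The plan is to deduce this corollary directly from Theorem~\ref{thm:maintheorem}, whose hypotheses are designed to be satisfied by hyperbolic triangle groups, so the work lies in \emph{verifying} the structural conditions on $H = T_{i,j,k}$ and then transporting the conclusions. First I would recall the relevant properties of a hyperbolic triangle group: $H$ is a cocompact Fuchsian group, hence linear over $\mathbb{R}$, hyperbolic in the sense of Gromov, residually finite, virtually torsion-free, LERF (by results of Scott), and has trivial center. These are exactly the features that typically underlie a theorem of the kind~\ref{thm:maintheorem}, which realises finitely generated groups inside outer automorphism groups of HNN-extensions.

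Next I would isolate, in $H$, the malnormal quasi-convex data that the main theorem takes as input. In a cocompact Fuchsian group there is an abundance of infinite cyclic (or finitely generated) malnormal, quasi-convex subgroups, and using LERF together with Minasyan's separability machinery one can select pairs of such subgroups which are, moreover, not conjugate in $\aut(H)$; this is the rigidity needed so that the HNN-extension $G_Q$ admits no exotic outer automorphisms beyond the prescribed ones. Feeding $Q$ and this malnormal data into Theorem~\ref{thm:maintheorem} produces an HNN-extension $G_Q$ of $H$ such that $\out(G_Q)$ contains $Q$ with index bounded by a constant depending only on $H$ (essentially the order of the finite group of automorphisms of the HNN-data arising from the finite symmetry group of $H$ and the associated subgroups).

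For the residual-finiteness clause, I would note that $H$ is virtually a surface group and hence virtually compact special in the sense of Haglund--Wise, so HNN-extensions of $H$ along malnormal quasi-convex subgroups remain virtually special and therefore residually finite by Wise's malnormal-amalgamation theorem. Combined with the fact that $Q$ is assumed residually finite, one can upgrade the construction (as in the author's earlier work~\cite{logan2015outer}) so that $G_Q$ is itself residually finite: the standard trick is to form $G_Q$ inside a product of $H$-based pieces with $Q$, or to use a residually finite thickening of $Q$, ensuring residual finiteness is preserved through the HNN-step.

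The main obstacle I expect is the combinatorial verification that the chosen malnormal subgroups in $H$ are sufficiently ``rigid'' to pin down $\out(G_Q)$ up to finite index, rather than just giving an injection $Q \hookrightarrow \out(G_Q)$. In other words, the delicate point is not producing the embedding of $Q$ (which Theorem~\ref{thm:maintheorem} hands us) but controlling the quotient $\out(G_Q)/Q$: this requires a careful analysis of how $\aut(H)$ permutes the chosen associated subgroups, and a proof that, up to composition with inner automorphisms and the finitely many symmetries of the HNN-data, every automorphism of $G_Q$ descends to a known element of $Q$.
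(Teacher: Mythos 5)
Your opening sentence is the right plan --- the paper's entire proof of this corollary is a one-line verification that a hyperbolic triangle group satisfies the hypotheses of Theorem~\ref{thm:maintheorem} --- but the rest of the proposal verifies the wrong things and so has a genuine gap. Theorem~\ref{thm:maintheorem} asks for exactly five properties of $H$: hyperbolicity, residual finiteness, largeness (a finite-index subgroup surjecting onto $F_2$), Serre's property FA, and a torsion-free subgroup of finite index. Of these, the two that carry all the weight and that your list omits entirely are \emph{largeness} and \emph{property FA}. Largeness is the source of the surjection of a finite-index subgroup $V_n$ onto an arbitrary finitely generated $Q$, without which there is no candidate for the associated subgroup $K$; property FA is what forces $\out^H(G_Q)=\out(G_Q)$ and makes $\out(H)$ finite, which is precisely the ``control of $\out(G_Q)/Q$'' you flag as the main obstacle at the end. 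That control is handled inside the proof of Theorem~\ref{thm:maintheorem} (via Theorem~\ref{thm:Prelim1} together with FA and finiteness of $\out(H)$ and $C_H(K)$); the corollary requires no additional rigidity argument. For hyperbolic triangle groups all five properties are classical: they are hyperbolic cocompact Fuchsian groups, residually finite, virtually torsion-free (Fox/Feuer, or Selberg's lemma), large (they are virtually surface groups of genus at least $2$), and have property FA since they are generated by torsion elements with torsion pairwise products.

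The more serious misstep is your description of the input to Theorem~\ref{thm:maintheorem} as ``malnormal quasi-convex data.'' The theorem has no such hypothesis, and the construction is in fact the opposite of malnormal: $K$ is the kernel of a surjection $V_n\twoheadrightarrow Q$ from a \emph{finite-index} subgroup, so $K$ is normal in $V_n$ and $N_H(K)$ has finite index in $H$; moreover $K$ has infinite index and is not quasi-convex in general. Consequently the residual-finiteness clause does not come from virtual specialness or Wise's malnormal quasi-convex amalgamation machinery, but from Proposition~\ref{thm:Prelim2}, a Baumslag--Tretkoff-type separability criterion for inner HNN-extensions whose associated subgroup has finite-index normalizer, which reduces residual finiteness of $G_Q$ to that of $N_H(K)/K$, a group containing $Q\cong V_n/K$ with finite index. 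So no ``upgrade'' or ``thickening of $Q$'' is needed; the hypothesis that $Q$ is residually finite feeds directly into Proposition~\ref{thm:Prelim2}.
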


The following corollary is satisfied by a random group, in the sense of Gromov~\cite{gromov1996geometric}~\cite{ollivier2005january}, at density $<1/6$ \cite{dahmani2011random}~\cite{ollivier2011cubulating}.

\begin{corollary}
\label{corol:random}
Fix a hyperbolic group $H$ which has Serre's property FA and which acts properly and cocompactly on a $\cat(0)$ cube complex. Then every finitely-generated group $Q$ can be embedded as a finite index subgroup of the outer automorphism group of an HNN-extension $G_Q$ of $H$, where $G_Q$ is residually finite if $Q$ is residually finite.
\end{corollary}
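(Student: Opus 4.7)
The approach is to deduce Corollary~\ref{corol:random} as a direct application of Theorem~\ref{thm:maintheorem}: the task is to verify that any hyperbolic group $H$ with Serre's property FA that acts properly and cocompactly on a $\cat(0)$ cube complex fulfils the theorem's hypotheses, and then to transfer residual finiteness from $Q$ to the resulting HNN-extension $G_Q$.

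First I would check that the algebraic requirements of Theorem~\ref{thm:maintheorem} hold. Hyperbolicity and property FA are presumably among them: the former supplies the usual quasiconvex subgroup theory, and property FA rigidifies the Bass--Serre tree of the HNN-extension, letting one read off $\out(G_Q)$ from the automorphisms of the pair $(H,C)$, where $C$ denotes the associated subgroup. Hyperbolic groups carry an abundance of quasiconvex malnormal subgroups, so producing a $C$ of the form Theorem~\ref{thm:maintheorem} demands should be straightforward; the embedding $Q \hookrightarrow \out(G_Q)$ with finite-index image is then inherited directly from the main theorem.

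The second and main step is residual finiteness, which is where the $\cat(0)$ cube complex hypothesis is used. By Agol's theorem, a hyperbolic group that acts properly and cocompactly on a $\cat(0)$ cube complex is virtually compact special, so in particular $H$ is residually finite and its quasiconvex subgroups are separable. Given that the associated subgroup $C$ is quasiconvex and malnormal, Wise's cubulation theorems for malnormal HNN-extensions of hyperbolic virtually special groups make $G_Q$ itself virtually compact special, whence residually finite, provided $Q$ is residually finite (which is what the main theorem needs in order to propagate residual finiteness through the HNN-extension).

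I expect the main obstacle to lie not in the cubulation argument --- a direct consequence of Agol and Wise --- but in the matching of $C$ to whatever additional structure Theorem~\ref{thm:maintheorem} requires. Any constraints on $C$ beyond quasiconvexity and malnormality would have to be engineered inside $H$, and although hyperbolicity together with property FA should provide enough flexibility, this is the delicate step that must be carried out with care, in parallel with the argument sustaining Corollary~\ref{corol:triangle} in the triangle-group case.
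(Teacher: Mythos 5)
Your top-level strategy---deduce the corollary by checking the hypotheses of Theorem~\ref{thm:maintheorem}---is the right one, but the content of your verification does not match what that theorem actually asks for, and your proposed mechanism for residual finiteness of $G_Q$ is incompatible with the construction. The paper's proof is a one-liner: Agol's theorem says a hyperbolic group acting properly and cocompactly on a $\cat(0)$ cube complex is virtually compact special, which delivers exactly the missing hypotheses (\ref{list:rf}), (\ref{list:large}) and (\ref{list:tfSub}) of Theorem~\ref{thm:maintheorem} (residual finiteness, largeness, and a torsion-free finite-index subgroup), while (\ref{list:Hyp}) and (\ref{list:FA}) are given. Nothing else needs to be done: the embedding of $Q$ and the residual finiteness of $G_Q$ are part of the \emph{conclusion} of Theorem~\ref{thm:maintheorem}, not something the corollary must re-establish. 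Your proposal omits largeness and virtual torsion-freeness entirely, which are precisely the hypotheses the cube-complex assumption is there to supply.

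The concrete gap is your treatment of the associated subgroup. You posit a quasiconvex malnormal $C$ and invoke Wise's malnormal combination theorem to make $G_Q$ virtually special. But in the construction of Theorem~\ref{thm:maintheorem} the associated subgroup $K$ is the kernel of a surjection from a torsion-free finite-index subgroup $V_n\leq H$ onto $Q$; hence $V_n\leq N_H(K)$, so $N_H(K)$ has finite index in $H$ and $K$ is as far from malnormal as possible. Indeed malnormality would force $N_H(K)=K$, destroying the very embedding $Q\cong V_n/K\hookrightarrow\out^H(G_Q)$ that Theorem~\ref{thm:Prelim1} provides. Moreover $K$ has infinite index and is not quasiconvex when $Q$ is infinite, so the cubulation/combination machinery does not apply, and $G_Q$ is not expected to be virtually special. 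Residual finiteness of $G_Q$ is instead obtained from Proposition~\ref{thm:Prelim2} (the Ate\c{s}--Logan--Pride criterion for inner HNN-extensions), which reduces it to residual finiteness of $N_H(K)/K$, a group containing $Q$ with finite index. So the ``delicate step'' you anticipate---engineering $C$---is not part of this corollary at all; the only work is citing Agol.
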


The main result of the paper is the following. By a \emph{continuum} we mean a set of cardinality $2^{\aleph_0}$, that is, of cardinality equal to that of the real numbers $\mathbb{R}$.

\begin{thmletter}
\label{corol:mainresult}
There exists a continuum of finitely generated, residually finite groups whose outer automorphism groups are pairwise non-isomorphic finitely generated, non-recursively-presentable groups.
\end{thmletter}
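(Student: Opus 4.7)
The plan is to feed the continuum of finitely generated, residually finite, non-recursively-presentable groups produced by Minasyan--Ol'shanskii--Sonkin directly into Corollary~\ref{corol:triangle} (or Corollary~\ref{corol:random}), and then extract a continuum of pairwise non-isomorphic outer automorphism groups by a counting argument over finite-index subgroups.

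Concretely, let $\{Q_\alpha : \alpha < 2^{\aleph_0}\}$ be a continuum of pairwise non-isomorphic, finitely generated, residually finite groups which are not recursively presentable, as provided by Minasyan--Ol'shanskii--Sonkin~\cite[Theorem~4]{minasyan2009periodic}. For each $\alpha$ I would apply Corollary~\ref{corol:triangle} to obtain a finitely generated, residually finite group $G_\alpha$ such that $Q_\alpha$ embeds as a finite index subgroup of $\out(G_\alpha)$. Two standard verifications then pin down the desired properties of $\out(G_\alpha)$. First, $\out(G_\alpha)$ is finitely generated because it contains the finitely generated group $Q_\alpha$ with finite index. Second, $\out(G_\alpha)$ is not recursively presentable: if it were, then (by Higman's embedding theorem, via the fact that finite-index subgroups of finitely generated recursively presentable groups are themselves recursively presentable) the finite-index subgroup $Q_\alpha$ would be recursively presentable, contradicting the choice of $Q_\alpha$.

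It remains to extract a continuum of pairwise non-isomorphic $\out(G_\alpha)$'s from the continuum of $Q_\alpha$'s. The key observation is that every finitely generated group has only countably many finite-index subgroups (since there are only countably many homomorphisms from a fixed finitely generated group to finite symmetric groups). Hence, for a fixed isomorphism class of finitely generated groups $R$, there are at most countably many isomorphism classes of groups that appear as finite-index subgroups of $R$. Partitioning $\{Q_\alpha\}_{\alpha<2^{\aleph_0}}$ by the isomorphism class of $\out(G_\alpha)$, each part therefore has cardinality at most $\aleph_0$, so there must be $2^{\aleph_0}$ parts. Choosing one representative per part yields a continuum of $G_\alpha$ with pairwise non-isomorphic outer automorphism groups, each of which is finitely generated and not recursively presentable.

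The main obstacle here is really concentrated in Corollary~\ref{corol:triangle}, which is stated as already proved in the excerpt; granting that, the only subtlety in the present theorem is the counting argument in the last step, and this is handled cleanly by the countability of the set of finite-index subgroups of a finitely generated group.
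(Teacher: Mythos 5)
Your proposal is correct and follows essentially the same route as the paper: apply the main construction to the Minasyan--Ol'shanskii--Sonkin continuum, note that a group containing a finitely generated non-recursively-presentable subgroup of finite index is itself finitely generated and not recursively presentable, and then use the countability of the set of finite-index subgroups of a finitely generated group to extract continuum many pairwise non-isomorphic outer automorphism groups. The only cosmetic difference is your invocation of Higman's embedding theorem, which is not needed for the fact that finite-index subgroups of finitely generated recursively presentable groups are recursively presentable (Reidemeister--Schreier suffices).
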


We prove Theorem~\ref{corol:mainresult} by noting the existence of a continuum of finitely generated, residually finite groups which are not recursively presentable, and then apply Theorem~\ref{thm:maintheorem} (or rather, either of the above corollaries) to these groups.

\p{Outline of the paper}
In Section~\ref{sec:prelim} we give two preliminary results on a certain class of HNN-extensions, which we call ``inner'' HNN-extensions. These are Theorem~\ref{thm:Prelim1}, which describes a certain subgroup of the outer automorphism group of an inner HNN-extension, and Proposition~\ref{thm:Prelim2}, which classifies the residual finiteness of a certain class of inner HNN-extensions.
In Section~\ref{sec:main} we prove our main results, Theorems~\ref{corol:mainresult}~and~\ref{thm:maintheorem}.
In Section~\ref{sec:Qs} we prove a result for finitely presented (rather than finitely generated) residually finite groups.

\p{Acknowledgments}
The author would like to thank Steve Pride and Tara Brendle for many helpful discussions about the work surrounding the paper, and Henry Wilton for ideas which led to Corollary~\ref{corol:random}.

\section{Two preliminary results}
\label{sec:prelim}

Our construction of Theorem~\ref{thm:maintheorem}, which leads to the main result, applies two preliminary results on \emph{inner HNN-extensions}, which are HNN-extensions where the action of the stable letter on the associated subgroup(s) is an inner automorphism of the base group. Such an HNN-extension $G$ has the following form (up to isomorphism).
\[
G\cong \langle H, t; k^t=k, k\in K\rangle
\]
The first result of this section, Theorem~\ref{thm:Prelim1}, relates to the outer automorphism groups of inner HNN-extensions, while the second result, Proposition~\ref{thm:Prelim2}, relates to their residual finiteness.

\p{First preliminary result} The first preliminary result, Theorem~\ref{thm:Prelim1}, tells us about a subgroup of the outer automorphism group of an inner HNN-extension. This subgroup, denoted $\out^H(G)$, is the subgroup which consists of those outer automorphisms $\Phi$ with a representative $\phi\in\Phi$ which fixes $H$ setwise, $\phi(H)=H$.
\[
\out^H(G)=\{\Phi\in\out(H):\text{ there exists }\phi\in\Phi \text{ such that }\phi(H)=H\}
\]
Theorem~\ref{thm:Prelim1} gives, under certain conditions, the isomorphism class of this subgroup up to finite index. We write $A\leq_f B$ to mean that $A$ is a finite index subgroup of $B$.

\begin{theorem}\label{thm:Prelim1}
Let $G$ be an inner $HNN$-extension of $H$ with associated subgroup $K\lneq H$.
If $V$ is a subgroup of $H$ such that $K\leq V\leq N_H(K)$ and such that $V\cap Z(H)=1$ then $V/K$ embeds into $\out^H(G)$.
In addition, if $V\leq_f N_H(K)$ and if both $\out(H)$ and $C_H(K)$ are finite then this embedding is with finite index.
\end{theorem}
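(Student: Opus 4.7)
The plan is to define an explicit embedding $V/K \hookrightarrow \out^H(G)$ by sending each $v \in V$ to the outer class of the automorphism of $G$ that conjugates $H$ by $v$ and fixes $t$, and then, for the finite-index part, to argue via Britton's lemma and Bass--Serre theory that any $\Phi \in \out^H(G)$ has, up to finitely many choices, this form.

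For the embedding, I define $\phi_v \in \aut(G)$ for each $v \in V$ by $\phi_v(h) = v^{-1} h v$ on $H$ and $\phi_v(t) = t$. Well-definedness follows from $v \in N_H(K)$ (so $v^{-1} K v = K$) and $t$ centralising $K$; the inverse is $\phi_{v^{-1}}$, and $\phi_v(H) = H$, so $v \mapsto [\phi_v]$ defines a homomorphism $V \to \out^H(G)$. The containment $K \subseteq \ker$ is immediate: for $v \in K$, $\phi_v$ coincides with the inner automorphism $\operatorname{conj}_v$ on all of $G$ (since $v^{-1} t v = t$ when $v \in K$). For the reverse, if $\phi_v = \operatorname{conj}_g$ for some $g \in G$, then $g$ centralises $t$ and $g v^{-1}$ centralises $H$; a standard Britton's lemma argument using $K \lneq H$ gives $C_G(H) = Z(H)$, so $g = zv$ with $z \in Z(H)$, and a further Britton argument applied to $g \in C_G(t)$ forces $zv \in K$. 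The hypothesis $V \cap Z(H) = 1$ then yields $z = 1$ and $v \in K$.

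For the finite-index claim, let $\Phi \in \out^H(G)$ with representative $\phi$, $\phi(H) = H$. The assignment $\Phi \mapsto [\phi|_H]$ is a well-defined homomorphism $\out^H(G) \to \out(H)$ (using $N_G(H) = H$, again by Britton), so, as $\out(H)$ is finite, we may pass to its kernel and, after replacing $\phi$ by an inner conjugate, assume $\phi|_H = \operatorname{id}$; then $\sigma := \phi(t) \in C_G(K)$ and $\langle H, \sigma \rangle = G$. A Britton/Bass--Serre analysis of such $\sigma$ yields $\sigma = c h^{-1} t^{\epsilon} h$ for some $c \in C_H(K)$, $h \in N_H(K)$, and $\epsilon \in \{\pm 1\}$. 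Under this parametrisation, the image of $V$ via $v \mapsto [\phi_v]$, after the same reduction, corresponds to triples $(1, v^{-1}, +1)$. As $C_H(K)$ and $\{\pm 1\}$ are finite, $V \leq_f N_H(K)$, and $|\out(H)| < \infty$, the image $V/K$ has finite index in $\out^H(G)$, bounded by roughly $|\out(H)| \cdot 2 \cdot |C_H(K)| \cdot [N_H(K) : V]$.

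The main obstacle is the Britton/Bass--Serre analysis extracting the form $\sigma = c h^{-1} t^{\epsilon} h$. Concretely, one writes $\sigma = h_0 t^{\epsilon_1} h_1 \cdots t^{\epsilon_n} h_n$ in reduced HNN normal form and uses $\sigma k \sigma^{-1} = k$ for all $k \in K$ inductively to force each tail product $h_i \cdots h_n$ to normalise $K$ and the full product $h_0 \cdots h_n$ to centralise $K$; the generation condition $\langle H, \sigma \rangle = G$ then forces $\sigma$ to act as a translation of length $1$ on the Bass--Serre tree of $G$ (since otherwise the orbit of the base vertex under $\langle H, \sigma\rangle$ would miss vertices), which pins down $n = 1$ and yields the stated form.
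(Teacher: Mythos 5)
Your first half (the embedding $V/K\hookrightarrow\out^H(G)$) is correct and complete: the homomorphism $v\mapsto[\phi_v]$, the identification of the kernel via $C_G(H)=Z(H)$ (which does use $K\lneq H$), the fact that $C_G(t)\cap H=K$, and the final appeal to $V\cap Z(H)=1$ all check out. Note that this is already a different route from the paper, whose entire proof is a citation: it quotes the structure of the subgroup $\out_H(G)$ of classes with a representative fixing $H$ and sending $t$ to a word with exactly one $t$-letter from an earlier paper of Logan, and then quotes a lemma of Pettet to identify $\out_H(G)$ with $\out^H(G)$.

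The genuine gap is in the finite-index half, and it sits exactly where the paper invokes Pettet: your claim that $\langle H,\sigma\rangle=G$ forces $\sigma$ to translate $v_0$ a distance $1$ in the Bass--Serre tree. The parenthetical justification --- ``since otherwise the orbit of the base vertex under $\langle H,\sigma\rangle$ would miss vertices'' --- is circular: if $\langle H,\sigma\rangle=G$ then the orbit of $v_0$ is all of $V(T)$ no matter how long $\sigma$ is, so nothing is ``missed''; what you need is the converse implication, that $d(v_0,\sigma v_0)\geq 2$ forces $\langle H,\sigma\rangle\lneq G$. That statement appears to be true for $\sigma\in C_G(K)$ (e.g.\ for $\sigma=th_1t^{-1}h_1^{-1}t$ with $h_1\in N_H(K)\setminus K$ one can check by Britton's lemma that every element of $\langle H,\sigma\rangle$ has reduced $t$-length a multiple of... well, that no reduced form of $t$-length $1$ arises, so $t\notin\langle H,\sigma\rangle$), but it is the real content of the theorem and needs a genuine normal-form or tree argument, not an assertion; it is precisely what Pettet's lemma supplies in the paper's proof. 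A secondary, repairable issue: granting $n=1$ and writing $\sigma=h_0t^{\epsilon}h_1$, the centralizing condition only gives $h_0^{-1}Kh_0\subseteq K$ and $h_0h_1\in C_H(K)$, not $h_0\in N_H(K)$; to upgrade the containment to equality you must use that $\phi$ is an automorphism (apply the same analysis to $\phi^{-1}(t)$ and combine the two one-sided containments). With both of these points supplied, your coset count does yield the finite-index conclusion.
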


\begin{proof}
Let $\out_H(G)$ denote the subgroup of $\out(G)$ consisting of those outer automorphisms $\Phi$ with a representative $\phi$ which fixes $H$ setwise and which sends $t$ to a word containing precisely one $t$-term. The result holds for $\out_H(G)$ in place of $\out^H(G)$ \cite[Theorem A \& Lemma 5.2]{logan2015HNN}. Then $\out_H(G)=\out^H(G)$ by a result of M. Pettet \cite[Lemma 2.6]{pettet1999automorphism}.
\end{proof}

\p{Second preliminary result} The second result applied in Theorem~\ref{thm:maintheorem} is a criterion for residual finiteness of inner HNN-extensions. Ate\c{s}-Logan-Pride actually prove a more general version of this result \cite{AtesPride}.
We use the fact that a finite index subgroup $F$ of a group $G$ is residually finite if and only if $G$ is residually finite implicitly throughout the proof of this theorem. To prove this equivalence, note that subgroups of residually finite groups are clearly residually finite, while for the other direction re-write the definition of a residually finite group using normal subgroups (corresponding to the kernels of the homomorphisms $\phi_g$), and note that every finite index subgroup of $F$ contains a finite index subgroup which is normal in $G$.

\begin{proposition}[Ate\c{s}-Logan-Pride \cite{AtesPride}]
\label{thm:Prelim2}
Let $G$ be an inner $HNN$-extension of a group $H$ with non-trivial associated subgroup $K\lneq H$.
Suppose $H$ is finitely generated and residually finite, and suppose that $N_H(K)$ has finite index in $H$. Then $G$ is residually finite if and only if $N_H(K)/K$ is residually finite.
\end{proposition}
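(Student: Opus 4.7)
The plan is to reduce to a finite-index subgroup of $G$ with a more tractable structure, and then handle the two implications separately using Gruenberg's theorem and Britton's Lemma. Set $N := N_H(K)$. I would first identify the HNN-extension $G$ with the amalgamated free product $H *_K (K \times \langle t\rangle)$ and observe that the subgroup $G_0 := \langle N, t\rangle$ coincides with the amalgamated subgroup $N *_K (K\times\langle t\rangle)$; since $K \leq N$, the subgroup $G_0$ therefore has index $[H:N]$ in $G$, and $G$ is residually finite if and only if $G_0$ is. Moreover, because $K \triangleleft N$ and $t$ centralises $K$, the subgroup $K$ is normal in $G_0$, with quotient $G_0/K \cong (N/K) * \langle t\rangle$.

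For the backward direction, suppose $N/K$ is residually finite. Then $G_0/K$ is residually finite as the free product of two residually finite groups (Gruenberg). Let $g \in G_0 \setminus \{1\}$: if $g \notin K$, its image in $G_0/K$ is non-trivial and is separated from $1$ in a finite quotient of $G_0/K$; if instead $g \in K \setminus \{1\}$, then using that $N$ is residually finite (as a subgroup of $H$) I find a finite-index normal subgroup $M \triangleleft N$ with $g \notin M$, and since the relations $k^t = k$ of $G_0$ become trivial when $t \mapsto 1$, the map $N \to N/M$ extends to a homomorphism $G_0 \to N/M$ that separates $g$ from $1$.

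For the forward direction, assume $G_0$ is residually finite and fix $g \in N \setminus K$. Viewing $G_0$ as an HNN-extension of $N$ over $K$, Britton's Lemma shows that the word $gtg^{-1}t^{-1}$ is reduced (the only candidate pinch $tg^{-1}t^{-1}$ would require $g \in K$), so $[g,t] \neq 1$ in $G_0$. Residual finiteness then supplies a homomorphism $\phi\colon G_0 \to F$ onto a finite group with $\phi([g,t])\neq 1$, i.e.\ $\phi(g)$ does not commute with $\phi(t)$. Because $t$ centralises $K$, the image $\phi(t)$ centralises $\phi(K)$, which forces $\phi(g) \notin \phi(K)$ and hence $g \notin K\cdot\ker(\phi)$. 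The subgroup $P := K\cdot(\ker\phi \cap N)$ is then a finite-index normal subgroup of $N$ (as the product of two subgroups normal in $N$) which contains $K$ but misses $g$, establishing residual finiteness of $N/K$.

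The main obstacle I anticipate is the structural reduction in the first step: verifying rigorously that $G_0 = \langle N, t\rangle$ has finite index in $G$ and is itself an inner HNN-extension of $N$ over $K$. This rests on the standard identification of HNN-extensions with amalgamated products, together with the observation that $K$, the amalgamated subgroup, already lies in $N$; replacing the ``large'' factor $H$ by $N$ then cuts out a subgroup of index exactly $[H:N]$. Once this reduction is in place, both implications follow quickly from Gruenberg's free product theorem and a direct application of Britton's Lemma.
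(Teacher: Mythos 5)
Your forward implication is correct and is in fact a pleasant elementary alternative to the paper's argument for that direction (the paper instead embeds $N_H(K)/K$ into $\aut(G)$ and quotes Baumslag's theorem that automorphism groups of finitely generated residually finite groups are residually finite). The fatal problem is your very first reduction: the claim that $G_0=\langle N_H(K),t\rangle\cong N_H(K)*_K(K\times\langle t\rangle)$ has index $[H:N_H(K)]$ in $G$ is false. Replacing the factor $H$ of the amalgam $H*_K(K\times\langle t\rangle)$ by a proper subgroup containing $K$ does not cut out a subgroup of index $[H:N_H(K)]$; generically it produces a subgroup of \emph{infinite} index. Concretely, take $H=S_3$ and $K=\langle(12)\rangle$, so that $N_H(K)=K$ and $[H:N_H(K)]=3$. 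Then $G=S_3*_{C_2}(C_2\times\mathbb{Z})$ contains a nonabelian free subgroup (ping-pong, since $[S_3:C_2]=3$ and $[C_2\times\mathbb{Z}:C_2]=\infty$), whereas $G_0=\langle(12),t\rangle\cong C_2\times\mathbb{Z}$ is virtually cyclic; a finite-index subgroup of a group containing $F_2$ cannot be virtually cyclic, so $[G:G_0]=\infty$. (Alternatively, the rational Euler characteristics $\chi(G)=\tfrac{1}{6}+0-\tfrac{1}{2}=-\tfrac{1}{3}$ and $\chi(G_0)=0$ are incompatible with finite index.)

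Because $G_0$ is only a subgroup of infinite index, your backward direction collapses: residual finiteness of $G_0/K\cong(N_H(K)/K)*\langle t\rangle$ says nothing about elements of $G$ lying outside $G_0$, for instance reduced words such as $th_1th_2$ with $h_i\notin N_H(K)$, and separating precisely such elements is where the real content of the proposition lies. The paper's proof of this direction takes a genuinely different route: it invokes the Baumslag--Tretkoff criterion, by which $G$ is residually finite provided that for every finite set $g_1,\dots,g_n\in H\setminus K$ there is a finite-index normal subgroup $M\unlhd_f H$ with $g_iK\cap M=\emptyset$; one then separates each $g_i$ using the core of $N_H(K)$ when $g_i\notin N_H(K)$, and a finite quotient of $N_H(K)/K$ pulled back to $N_H(K)$ when $g_i\in N_H(K)$. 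Your forward direction survives intact since it only uses that $G_0$ is a subgroup of $G$, but the backward direction needs to be reworked along these lines.
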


Our application of Proposition~\ref{thm:Prelim2} only uses the ``if'' direction, and not the ``only if'' direction.

\begin{proof}
Firstly, $N_H(K)/K$ embeds into $\aut(G)$ \cite[Proposition 5.3]{logan2015HNN}, hence $G$ is residually finite only if $N_H(K)/K$ is residually finite \cite{Baumslag}.

For the other direction, note that the HNN-extension $G$ is residually finite if for all finite sets $\{g_1, \ldots, g_n\}$ with $g_i\in H\setminus K$ there exists some finite index normal subgroup $N$ of $H$, $N\unlhd_f H$, such that $g_iK\cap N$ is empty for all $i\in\{1, \ldots, n\}$ \cite[Lemma 4.4]{BaumslagTretkoff}. We prove that this condition holds under the conditions of this lemma. To do this, we find for each such $g_i$ a normal subgroup $N_i$ of finite index in $H$ such that $g_iK\cap N_i$ is empty. Then, the finite-index subgroup $N:=\cap N_i$ has the required properties. There are two cases: $g_i\not\in N_H(K)$, and $g_i\in N_H(K)$.

Suppose $g_i\not\in N_H(K)$. Take the normal subgroup $N_i$ to be the intersection of the (finitely many) conjugates of $N_H(K)$. Then $hK\cap N_i$ is non-empty if and only if $h\in N_H(K)$, and hence $g_iK\cap N_i$ is empty.

Suppose $g_i\in N_H(K)$. Then $g_iK\neq K$ and because $N_H(K)/K$ is residually finite there exists a map $\psi_i: N_H(K)/K\rightarrow F_i$, such that $F_i$ is finite and $g_iK$ is not contained in the kernel of $\psi_i$. Therefore, there exists a map $\widetilde{\psi_i}: N_H(K)\rightarrow N_H(K)/K\xrightarrow{\psi_i} F_i$ such that $g_i$ is not contained in the kernel of $\widetilde{\psi_i}$, and take $N_i$ to be the kernel of the map $\widetilde{\psi_i}$. Then, $g_iK\cap N_i$ is empty by construction.
\end{proof}

\section{The proof of the main result}
\label{sec:main}

In this section we prove Theorems~\ref{corol:mainresult}~and~\ref{thm:maintheorem}. Recall that Theorem~\ref{corol:mainresult} is the main result of this paper.

\begin{thmletter}
\label{thm:maintheorem}
Fix a group $H$ such that $H$ is
\begin{enumerate}
\item\label{list:Hyp} hyperbolic,
\item\label{list:rf} residually finite, and
\item\label{list:large} large, (that is, $H$ contains a finite index subgroup $V$ which surjects onto $F_2$),
\end{enumerate}
and such that $H$ has
\begin{enumerate}[resume]
\item\label{list:FA} Serre's property FA, and
\item\label{list:tfSub} a torsion-free subgroup $U$ of finite index.
\end{enumerate}
Then every finitely-generated group $Q$ can be embedded as a finite index subgroup of the outer automorphism group of an HNN-extension $G_Q$ of $H$, where $G_Q$ is residually finite if $Q$ is residually finite.
\end{thmletter}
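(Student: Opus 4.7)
The plan is to realise $Q$ as a finite-index subgroup of $\out(G_Q)$ by taking $G_Q$ to be the inner HNN-extension $G_Q=\langle H, t; k^t=k, k\in K\rangle$, where $K$ is a normal subgroup of a carefully chosen finite-index subgroup $V\leq_f H$ with $V/K\cong Q$, and then applying Theorem~\ref{thm:Prelim1} and Proposition~\ref{thm:Prelim2}.

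I will first construct $V$ and $K$. Intersecting the torsion-free $U$ (from hypothesis~\eqref{list:tfSub}) with the finite-index subgroup provided by largeness (hypothesis~\eqref{list:large}) gives a torsion-free, large finite-index subgroup of $H$; passing to a further finite-index subgroup (the preimage of a free subgroup of sufficiently large rank inside the $F_2$-quotient) then produces a torsion-free $V\leq_f H$ surjecting onto $Q$. I take $K$ to be the kernel of such a surjection, so $K\trianglelefteq V$ and $V/K\cong Q$.

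Next I will verify the hypotheses of Theorem~\ref{thm:Prelim1}. The inclusions $K\leq V\leq N_H(K)$ are immediate, and $V\leq_f N_H(K)$ follows from $V\leq_f H$. Largeness forces $H$ to be non-elementary, and non-elementary hyperbolic groups have trivial centre, so $V\cap Z(H)=1$. The subgroup $K$ is of finite index in non-elementary $H$, hence non-elementary itself, and centralizers of non-elementary subgroups of hyperbolic groups are finite, giving $|C_H(K)|<\infty$. Finally, $\out(H)$ is finite: by the Paulin--Rips--Sela theorem, an infinite outer automorphism group for a hyperbolic group forces a splitting over a virtually cyclic subgroup, contradicting Serre's property FA. Theorem~\ref{thm:Prelim1} then gives a finite-index embedding $Q\cong V/K\hookrightarrow \out^H(G_Q)$. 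For residual finiteness, if $Q$ is residually finite then $V/K\cong Q$ is residually finite, and as a finite-index subgroup of $N_H(K)/K$ this forces $N_H(K)/K$ to be residually finite as well; since $N_H(K)\supseteq V$ has finite index in $H$, Proposition~\ref{thm:Prelim2} then yields that $G_Q$ is residually finite.

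The final step is to upgrade $\out^H(G_Q)\leq \out(G_Q)$ to an equality, so that $Q$ sits as a finite-index subgroup of $\out(G_Q)$ itself. For $\phi\in\aut(G_Q)$, twisting the natural action of $G_Q$ on its Bass--Serre tree $T$ by $\phi$ and invoking property FA for $H\cong\phi(H)$, $\phi(H)$ must fix a vertex of $T$, hence $\phi(H)\leq gHg^{-1}$ for some $g\in G_Q$. A co-Hopf-type argument, using that $V\leq_f H$ is torsion-free and that $H$ is one-ended (FA rules out any splitting), should then promote this containment to equality; after adjusting $\phi$ by the inner automorphism associated with $g^{-1}$ it will fix $H$ setwise. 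I expect this last step to be the main obstacle: the FA hypothesis is what places $\phi(H)$ inside a single conjugate of $H$, but upgrading containment to equality is delicate and is precisely where the remaining rigidity hypotheses on $H$ (torsion-free finite-index subgroup, non-elementariness, hyperbolicity) must be combined.
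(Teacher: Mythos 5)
Your construction and overall route are the same as the paper's (an inner HNN-extension whose associated subgroup $K$ is the kernel of a surjection from a torsion-free finite-index subgroup $V$ onto $Q$, followed by Theorem~\ref{thm:Prelim1}, Proposition~\ref{thm:Prelim2}, and the identification $\out^H(G_Q)=\out(G_Q)$), but two of your verifications fail as written. The more serious one concerns the finiteness of $C_H(K)$: you deduce that $K$ is non-elementary from the claim that ``$K$ is of finite index in non-elementary $H$'', which is false whenever $Q$ is infinite, since $[V:K]=|Q|$. You need a genuine reason for $K$ to be non-elementary. The paper arranges this by insisting that the surjection $V\rightarrow Q$ factor through $F_n\rightarrow Q$ arising from a presentation of $Q$ with a \emph{non-empty} relator set, so that $K$ surjects onto a non-trivial, hence non-cyclic, normal subgroup of $F_n$; being also torsion-free (it lies in $V$), $K$ is then non-elementary and $C_H(K)$ is finite because $H$ is hyperbolic. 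Your construction permits a relator-free presentation of $Q$, in which case nothing you have said rules out $K$ being trivial or cyclic; it can be rescued (property FA makes $H$, hence $V$, one-ended, so $V$ is not free and $\ker(V\rightarrow F_m)$ is a non-trivial normal subgroup of the torsion-free non-elementary group $V$), but some such argument must be supplied. Relatedly, your assertion that non-elementary hyperbolic groups have trivial centre is false (consider $F_2\times\mathbb{Z}/2\mathbb{Z}$); the correct argument for $V\cap Z(H)=1$, which your construction does support and which the paper uses, is that $V$ is torsion-free while $Z(H)$ is finite.

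The second gap is the one you flag yourself: upgrading $\out^H(G_Q)\leq\out(G_Q)$ to equality. Property FA places $\phi(H)$ inside a vertex stabiliser $gHg^{-1}$ of the Bass--Serre tree, but your proposed ``co-Hopf-type argument'' is not supplied, and co-Hopficity is not really the right tool: $H$ may have torsion, and in any case you need to exclude a \emph{proper} containment between specific conjugates, not merely show that abstract self-embeddings of $H$ are onto. The paper disposes of this step by citing \cite[Lemma~2.1]{logan2015HNN}. A self-contained finish along your lines: applying the same FA argument to $\phi^{-1}$ and then applying $\phi$ gives $H\leq hHh^{-1}$ for some $h\in G_Q$; thus $H$ fixes both the base vertex $v$ (whose stabiliser is $H$) and $h\cdot v$, and if these were distinct then $H$ would fix an edge at $v$ and hence lie in an $H$-conjugate of $K$, which is impossible since $K\lneq H$. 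Therefore $h\in H$, all the containments collapse to equalities, $\phi(H)$ is a conjugate of $H$, and composing $\phi$ with a suitable inner automorphism fixes $H$ setwise.
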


Note that (\ref{list:Hyp}) implies (\ref{list:rf}) if and only if (\ref{list:Hyp}) implies (\ref{list:tfSub}) \cite{kapovich2000equivalence}.

\begin{proof}
We give the construction, and then we prove that the required properties hold.

The group $G_Q$ is an inner HNN-extension, $G_Q=\langle H, t; k^t=k, k\in K\rangle$. Specifying the associated subgroup $K$ completes the construction. Let $N$ be a subgroup of $H$ such that $V/N\cong F_2$, with $V$ as in the statement of the theorem. Note that we can assume $V$ is torsion-free, as for $U$ the torsion-free subgroup of finite index the image of $V\cap U$ under the map induced by $N$ is free and non-abelian, so rewrite $V:=V\cap U$. Then, for every natural number $n$ it holds that $H$ contains a torsion-free finite-index subgroup $V_n$ which maps onto $F_n$, which can be seen by applying the correspondence theorem to the fact that the free group on two-generators contain finite-index free subgroups of arbitrary rank.

Let $Q$ be a finitely generated group. Then take a presentation $\langle X; \mathbf{r}\rangle$ of $Q$ with $2\leq|X|<\infty$ and $\mathbf{r}$ non-empty, and so $V_n$ maps onto $Q$ with $n:=|X|$. Take $K$ to be the subgroup of $V_n$ (and so of $H$) associated with the kernel of this map, so $V_n/K\cong Q$. Note that because $V_n$ has finite index in $H$,
we have that $V_n\leq_f N_H(K)\leq_f H$.

We now prove that the required properties hold. As $N_H(K)$ has finite index in $H$, Proposition~\ref{thm:Prelim2} implies that $G_Q$ is residually finite if $Q$ is residually finite. We now prove that $Q$ can be embedded as a finite index subgroup into $\out(G_Q)$. We show that the conditions of Theorem~\ref{thm:Prelim1} are satisfied, with $V:=V_n$, and so $Q$ embeds with finite index into $\out^H(G_Q)$.
The result then follows because $H$ having has Serre's property FA implies that $\out^H(G_Q)=\out(G_Q)$ \cite[Lemma~2.1]{logan2015HNN}. So, $\out(H)$ is finite as the base group $H$ is hyperbolic group with Serre's property FA \cite{levitt2005automorphisms}. Now, $K$ is non-cyclic because the map $V_n\rightarrow V_n/K$ factors through a non-cyclic free group (by assumption the set of relators $\mathbf{r}$ in the presentation for $Q$ is non-empty), and so $C_H(K)$ is finite as $H$ is hyperbolic.
By construction we have $K\leq V_n\leq_f N_H(K)$, and finally $V_n\cap Z(H)$ as $V_n$ is torsion-free by construction while $Z(H)$ is finite as $H$ is hyperbolic.
\end{proof}

We now prove Corollaries~\ref{corol:triangle}~and~\ref{corol:random}.

\begin{proof}[Proof of Corollary~\ref{corol:triangle}]
For $H$ a hyperbolic triangle group the properties (\ref{list:rf})--(\ref{list:tfSub}) are well-known to hold 
\cite{baumslag1987generalized} %large
\cite{trees} %FA
\cite{feuer1971torsion}%tf finite index
%finite outer automorphism group \cite{zieschang1976triangle}.
.
\end{proof}

\begin{proof}[Proof of Corollary~\ref{corol:random}]
The required properties follow from Agol's theorem \cite{agol2012virtual}.
\end{proof}

We now prove the main result of this paper, Theorem~\ref{corol:mainresult}. Recall that by a \emph{continuum} we mean a set of cardinality $2^{\aleph_0}$($=|\mathbb{R}|$).

\begin{proof}[Proof of Theorem~\ref{corol:mainresult}]
Begin by noting that there exists a continuum of finitely generated, residually finite groups, and hence there is a set $\mathcal{Q}$, with cardinality the continuum, of such groups which are not recursively presentable~\cite[Theorem 4]{minasyan2009periodic}.
Applying Theorem~\ref{thm:maintheorem} to the set $\mathcal{Q}$, we obtain a set $\mathcal{G}=\{G_Q: Q\in\mathcal{Q}\}$ which consists of finitely generated, residually finite groups whose outer automorphism groups are finitely generated but not recursively presentable. Moreover, for $G_Q\in\mathcal{G}$, $\out(G_Q)$ has only countably many subgroups of finite index, and hence the set $\mathcal{G}$ contains a (subset consisting of a) continuum of groups with pairwise non-isomorphic outer automorphism groups.
\end{proof}

All the outer automorphism groups in Theorem~\ref{corol:mainresult} are residually finite. This leads us to the following question.
\begin{question}
Does there exist a finitely generated, non-recursively-presentable, non-residually-finite group $Q$ which can be realised as the outer automorphism group of a finitely generated, residually finite group $G_Q$?
\end{question}

\section{When $G_Q$ is finitely presented}
\label{sec:Qs}
We now prove a result on $\out(G_Q)$ for $G_Q$ finitely presented and residually finite.

\begin{theorem}
\label{thm:fpBumaginWise}
For every finitely presented, residually finite group $Q$ there exists a finitely presented, residually finite group $G_Q$ such that $Q$ embeds into $\out(G_Q)$.
\end{theorem}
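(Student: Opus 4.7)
The plan is to imitate the construction from the proof of Theorem~\ref{thm:maintheorem}, forming $G_Q$ as an inner HNN-extension $\langle H, t; k^t=k, k\in K\rangle$, but to replace the ``large subgroup'' input by a Rips-style short exact sequence so that $K$ is finitely generated \emph{as a subgroup} of $H$, rather than merely normally generated by finitely many elements. The point is that an inner HNN-extension over $H$ with associated subgroup $K$ is finitely presented exactly when $H$ is finitely presented and $K$ is finitely generated; so the real content of the theorem is securing a short exact sequence
\[
1 \longrightarrow K \longrightarrow H \longrightarrow Q \longrightarrow 1
\]
in which $H$ is a torsion-free, non-elementary, residually finite, hyperbolic group and $K$ is finitely generated. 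I would obtain this from the residually finite variant of the Rips construction, available by combining Wise's cubulation of $C'(1/6)$ small-cancellation groups with Agol's theorem that virtually special groups are residually finite; this takes the finitely presented group $Q$ as input.

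Having fixed such $H$ and $K$, set $G_Q:=\langle H, t; k^t=k, k\in K\rangle$. This is finitely presented by the preceding observation. To see it is residually finite, apply Proposition~\ref{thm:Prelim2}: since $K$ is normal in $H$, the normaliser $N_H(K)=H$ is trivially of finite index, while $N_H(K)/K\cong Q$ is residually finite by hypothesis. Finally, to embed $Q$ into $\out(G_Q)$ I would apply Theorem~\ref{thm:Prelim1} with $V:=H$. The inclusions $K\leq V\leq N_H(K)$ hold trivially, and $V\cap Z(H)=Z(H)=1$ because a non-elementary torsion-free hyperbolic group has trivial centre. The theorem then yields an embedding of $Q\cong V/K$ into $\out^H(G_Q)\leq\out(G_Q)$.

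The main obstacle lies in the first step, namely importing the correct form of the residually finite Rips construction; everything after that is a direct application of the preliminary results from Section~\ref{sec:prelim}. A minor housekeeping point: the degenerate case $Q=1$ forces $K=H$ and so falls outside the hypothesis $K\lneq H$ of Theorem~\ref{thm:Prelim1}, but in that case any finitely presented residually finite group serves as $G_Q$ since the trivial group embeds into every outer automorphism group.
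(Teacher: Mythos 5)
Your proposal is correct and follows essentially the same route as the paper: a residually finite version of Rips' construction (the paper cites Wise's directly, you derive it via cubulation plus Agol) produces $1\to K\to H\to Q\to 1$ with $K$ finitely generated, and then $G_Q$ is the inner HNN-extension, with Proposition~\ref{thm:Prelim2} giving residual finiteness via $N_H(K)=H$ and Theorem~\ref{thm:Prelim1} with $V:=H$ giving the embedding $Q\cong H/K\hookrightarrow\out(G_Q)$. Your handling of the degenerate case $Q=1$ (where $K=H$ violates $K\lneq H$) is a small point the paper's proof omits.
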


\begin{proof}
A version of Rips' construction due to Wise \cite{wise2003ripsconstruction} gives a finitely presented, centerless, residually finite group $H_Q$ with a three-generated subgroup $N=\langle a, b, c\rangle$ such that $H_Q/N\cong Q$.%
\footnote{More recent work of Wise and his coauthors prove that the group $H_Q$ in Rips' original construction is also residually finite. The main practical difference is that $N$ can then be taken to be two-generated \cite{rips1982subgroups}.}
Then the HNN-extension $G_Q=\langle H_Q, t; a^t=a, b^t=b, c^t=c\rangle$ is residually finite, by Theorem~\ref{thm:Prelim1}, while $Q\cong H_Q/K$ embeds into $\out(G_Q)$ by Proposition~\ref{thm:Prelim2}, with $V:=H_Q=N_{H_Q}(K)$.
\end{proof}

%\footnote
%{More recent work of Wise and his coauthors prove that the group $H_Q$ in Rips' original construction is also residually finite. The main practical difference is that $N$ can then be taken to be two-generated \cite{rips1982subgroups}.}

Note that the groups $Q$ in Theorem~\ref{thm:fpBumaginWise} can be taken to be any group which embeds into a finitely presentable, residually finite group.

We know nothing about the embedding $Q\hookrightarrow \out(G_Q)$ in Theorem~\ref{thm:fpBumaginWise}.
Indeed, Theorem~\ref{thm:fpBumaginWise} is similar to a result of Wise, who proved the analogous theorem for finitely generated groups $G_Q$ by proving that $G/N$ embeds into $\out(N)$ \cite[Corollary 3.3]{wise2003ripsconstruction}. Bumagin and Wise altered Rips' construction to make Wise's embedding an isomorphism \cite{BumaginWise2005}. It may be possible to similarly alter the construction of Theorem~\ref{thm:fpBumaginWise} to answer the following question. Note that if $Q$ is finitely generated and $G_Q$ is finitely presented and residually finite then $Q$ must be recursively presentable \cite[Proposition 3.4]{logan2015outer}.

\begin{question}
Can every finitely presented group $Q$ be realised as the outer automorphism group of some finitely presented, residually finite group $G_Q$? And for $Q$ finitely generated and recursively presentable?
\end{question}

\bibliographystyle{amsalpha}
\bibliography{BibTexBibliography}

\end{document}